\newtheorem{theor}{Theorem}
\newtheorem{lemma}[theor]{Lemma}
\newtheorem{cor}[theor]{Corollary}
\DeclareMathOperator{\K}{K}
\title{Strongly not relatives K\"ahler manifolds}
\author[M. Zedda]{Michela Zedda}
\date{\today}
\subjclass[2010]{53C55; 32H02}
\keywords{K\"ahler manifolds; complex submanifolds; diastasis function}
\address{Dipartimento di Matematica e Fisica\\ Universit\`a del Salento} 
 \email{michela.zedda@gmail.com}
\thanks{This work has been partially supported by the group G.N.S.A.G.A. of I.N.d.A.M} 
\begin{document}
\maketitle
\begin{abstract}
In this paper we study K\"ahler manifolds that are strongly not relative to any projective K\"ahler manifold, i.e. those K\"ahler manifolds that do not share a K\"ahler submanifold with any projective K\"ahler manifold even when their metric is rescaled by the multiplication by a positive constant. We prove two results which highlight some relations between this property and the existence of a full K\"ahler immersion into the infinite dimensional complex projective space.
As application we get that the $1$-parameter families of Bergman--Hartogs and Fock--Bargmann--Hartogs domains are strongly not relative to projective K\"ahler manifolds.
\end{abstract}
\section{Introduction and statement of the main results}

According to \cite{relatives}, two K\"ahler manifolds are called {\em relatives} when they share a common K\"ahler submanifold, i.e. if a complex submanifold of one of them with the induced metric is biholomorphically isometric to a complex submanifold of the other one with the induced metric. 
In his seminal paper \cite{calabi}, Calabi determined a criterion which characterizes K\"ahler manifolds admitting a K\"ahler immersion into finite or infinite dimensional complex space forms. The main tool he introduced is the {\em diastasis function} associated to a real analytic K\"ahler manifold, namely a particular K\"ahler potential characterized by being invariant under pull--back through a holomorphic map. 
Thanks to this property, the diastasis plays a key role in studying when two K\"ahler manifolds are relatives.
In \cite{umehara} Umehara proved that two finite dimensional complex space forms with holomorphic sectional curvatures of different signs can not be relatives. Although, as firstly pointed out by Bochner in \cite{bochner}, when the ambient space is allowed to be infinite dimensional, the situation is different: any K\"ahler submanifold of the infinite dimensional flat space $\ell^2(\mathds{C})$ admits a K\"ahler immersion into the infinite dimensional complex projective space.
Umehara's work has been generalized in the recent paper by X. Cheng and A. J. Di Scala \cite{fubinirelatives}, where the authors state necessary and sufficient conditions for 
complex space forms of finite dimension and different curvatures to not be relative to each others.
In \cite{relatives} A. J. Di Scala and A. Loi prove that a Hermitian symmetric space of noncompact type endowed with its Bergman metric is not relative to a projective K\"ahler manifold, i.e. a K\"ahler manifold which admit a local holomorphic and isometric (from now on {\em K\"ahler}) immersion into the {\em finite} dimensional complex projective space (see also \cite{huang} for the case of Hermitian symmetric spaces of noncompact type and Euclidean spaces), and their result has been generalized in \cite{mossa} to homogeneous bounded domains of $\mathds{C}^n$.
Throughout the paper, we say that a K\"ahler manifold is {\em projectively induced} when it admits a K\"ahler immersion into $\mathds{C}{\rm P}^{N\leq\infty}$. When we also specify that it is {\em infinite projectively induced}, we mean that the K\"ahler immersion is full into $\mathds{C}{\rm P}^{\infty}$.

In this paper we are interested in studying when a K\"ahler manifold $(M,g)$ is {\em strongly not relative} to any projective K\"ahler manifold, that is, when $(M,c\, g)$ is not relative to any projective K\"ahler manifold for any value of the constant $c>0$ multiplying the metric.

 Our first result can be viewed as a generalization of the results in \cite{relatives, mossa} and can be stated as follows:
\begin{theor}\label{hbd}
Let $(M,g)$ be a K\"ahler manifold such that $(M,\beta g)$ is infinite projectively induced for any $\beta>\beta_0\geq 0$. If $(M,g)$ and $\mathds{C}{\rm P}^n$ are not relatives for any $n<\infty$, then $(M,g)$ is strongly not relative to any projective K\"ahler manifold.
\end{theor}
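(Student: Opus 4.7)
I would argue by contradiction: assume there exist $c > 0$ and a projective K\"ahler manifold $(N, h)$ such that $(M, cg)$ and $(N, h)$ share a common K\"ahler submanifold $S$. Composing with a K\"ahler immersion $(N, h) \hookrightarrow \mathds{C}{\rm P}^L$ ($L < \infty$) gives a K\"ahler immersion $(S, cg|_S) \hookrightarrow \mathds{C}{\rm P}^L$. By Calabi's criterion this produces the local identity $c\, D_{g|_S}(z, \bar z) = \log(1 + \sum_{j=1}^L |f_j(z)|^2)$ around a chosen base point $p_0 \in S$, with $f_j$ holomorphic and vanishing at $p_0$.

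Next I would invoke the first hypothesis: for any $\beta > \beta_0$, there is a full K\"ahler immersion $(M, \beta g) \hookrightarrow \mathds{C}{\rm P}^\infty$, giving $\beta D_g = \log(1 + \sum_j |\psi_j|^2)$ globally, with $\psi_j$ holomorphic on $M$ and $\psi_j(p_0) = 0$. Restricting to $S$ and comparing with the previous identity produces the key equation
\[
\Bigl(1 + \textstyle\sum_{j=1}^L |f_j|^2\Bigr)^{\beta/c} \;=\; 1 + \textstyle\sum_j |\psi_j|_S|^2. \qquad (\star)
\]

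The heart of the argument---and the step I expect to be hardest---is the analysis of $(\star)$. The right-hand side, being a sum of squared norms of holomorphic functions, has a positive semi-definite Hermitian matrix of Taylor coefficients at $p_0$; the generalized binomial expansion of the left-hand side yields coefficients proportional to $\binom{\beta/c}{n}$, and whenever $\beta/c \notin \mathbb{Z}_{\geq 0}$ these alternate in sign for large $n$, violating positive semi-definiteness. This is transparent when $S$ is one-dimensional with $f_1$ a local coordinate, where the coefficient of $|z|^{2n}$ in the expansion is exactly $\binom{\beta/c}{n}$; the general higher-dimensional situation requires a careful multi-index bookkeeping through the multinomial expansion of $(\sum_k |f_k|^2)^n$, which is the main analytic hurdle. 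Since $\beta$ ranges freely over the open interval $(\beta_0, \infty)$ while $c\,\mathbb{Z}_{\geq 0}$ is discrete, I can pick $\beta > \beta_0$ with $\beta/c \notin \mathbb{Z}_{\geq 0}$, and $(\star)$ becomes contradictory, finishing the argument.

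The role of the second hypothesis is to close the residual case in which the relation $(\star)$ can be made to hold only when $\beta/c$ is forced to be a positive integer $m$ (for instance when $c = 1/m$ for some $m \in \mathbb{Z}_{\geq 1}$): in that situation, the $m$-th Veronese embedding applied to $(S, cg|_S) \hookrightarrow \mathds{C}{\rm P}^L$ realizes $(S, g|_S)$ as a K\"ahler submanifold of the finite-dimensional $\mathds{C}{\rm P}^{L_m}$ (with $L_m = \binom{L+m}{m}-1$), showing that $(M, g)$ and $\mathds{C}{\rm P}^{L_m}$ are relatives and directly contradicting the assumption that $(M, g)$ is not relative to any $\mathds{C}{\rm P}^n$.
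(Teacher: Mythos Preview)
Your strategy is essentially the \emph{opposite} of the paper's. You try to pick $\beta>\beta_0$ with $\beta/c\notin\mathbb{Z}$ and force a contradiction from $(\star)$ via a sign analysis of the generalized binomial coefficients. The paper instead picks a positive \emph{integer} $\alpha$ with $c\alpha>\beta_0$, sets $\beta=c\alpha$, and observes that then the left side of $(\star)$ is the $\alpha$-fold Veronese of a map into $\mathds{C}{\rm P}^n$, hence a \emph{finite} sum $1+\sum_{k=1}^{N}|\tilde h_k|^2$; Calabi rigidity (Theorem~\ref{local rigidityb}) compared with the full map $F\circ f$ into $\mathds{C}{\rm P}^\infty$ then yields the contradiction immediately, with no coefficient bookkeeping at all.

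Your sign analysis, as stated, has a genuine gap. It is \emph{not} true that choosing $\beta/c\notin\mathbb{Z}$ automatically makes the coefficient matrix of $(1+\sum_{j=1}^L|f_j|^2)^{\beta/c}$ fail to be positive semidefinite. For instance, on a one--dimensional $S$ with coordinate $\xi$, take $L=2$, $f_1(\xi)=\sqrt{2}\,\xi$, $f_2(\xi)=\xi^2$; then $1+|f_1|^2+|f_2|^2=(1+|\xi|^2)^2$, so $(1+\sum|f_j|^2)^{\beta/c}=(1+|\xi|^2)^{2\beta/c}$ has all nonnegative coefficients whenever $\beta/c$ is a half-integer. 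More generally the ``bad'' set of exponents for which $(\star)$ is consistent can strictly contain $\mathbb{Z}_{>0}$, and you have not shown it is a proper subset of $(\beta_0/c,\infty)$. That is the real content of what you flag as ``the main analytic hurdle,'' and it is not just multi-index bookkeeping. Notice also that if your sign argument did go through for a single non-integer $\beta/c$, the second hypothesis would never be used --- a sign that the argument is proving more than the theorem claims.

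Ironically, your final paragraph is where you nearly land on the paper's idea: the Veronese trick you invoke for the ``residual'' values $c=1/m$ is exactly what the paper uses as the \emph{main} argument for every $c>0$, by choosing the integer $\alpha$ first and then appealing to Calabi rigidity rather than to the hypothesis on $(M,g)$ directly.
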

Observe that in general there are not reasons for a K\"ahler manifold which is not relative to another K\"ahler manifold to remain so when its metric is rescaled. For example, consider that the complex projective space $(\mathds{C}{\rm P}^2,c\,g_{FS})$ where $g_{FS}$ is the Fubini--Study metric, for $c=\frac23$ is not relative to $(\mathds{C}{\rm P}^2,g_{FS})$, while for positive integer values of $c$ it is (see \cite{fubinirelatives} for a proof).

In order to state our second result, consider a $d$-dimensional K\"ahler manifold $(M,g)$ which admits global coordinates $\{z_1,\dots, z_d\}$ and denote by $M_{j}$ the $1$-dimensional submanifold of $M$ defined by:
$$
M_j=\{ z\in M|\, z_1=\dots=z_{j-1}=z_{j+1}=\dots=z_d=0\}.
$$
When exists, a K\"ahler immersion $f\!:M\rightarrow \mathds{C}{\rm P}^\infty$ is said to be {\em transversally full} when for any $j=1,\dots, d$, the immersion restricted to $M_{j}$ is full into $\mathds{C}{\rm P}^\infty$.

\begin{theor}\label{trfull}
Let $(M, g)$ be a K\"ahler manifold infinite projectively induced through a transversally full map. If for any $\alpha\geq \alpha_0>0$, $(M,\alpha\, g)$ is infinite projectively induced then $(M,g)$ is strongly not relative to any projective K\"ahler manifold.
\end{theor}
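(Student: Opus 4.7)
The plan is to derive Theorem \ref{trfull} from Theorem \ref{hbd}. Since the hypothesis that $(M,\alpha g)$ is infinite projectively induced for every $\alpha\ge\alpha_0>0$ directly entails the rescaling hypothesis of Theorem \ref{hbd} (with $\beta_0=\alpha_0$), it suffices to verify that $(M,g)$ is not relative to $\mathds{C}{\rm P}^n$ for any finite $n$.

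Assume, for contradiction, that $(M,g)$ and $\mathds{C}{\rm P}^n$ share a common K\"ahler submanifold $S$, exhibited by K\"ahler immersions $\phi\colon S\to M$ and $\psi\colon S\to\mathds{C}{\rm P}^n$ inducing the same metric on $S$. Composing $\psi$ with the inclusion $\mathds{C}{\rm P}^n\hookrightarrow\mathds{C}{\rm P}^\infty$ and composing $\phi$ with the given transversally full K\"ahler immersion $f\colon M\to\mathds{C}{\rm P}^\infty$ produces two K\"ahler immersions of $S$ into $\mathds{C}{\rm P}^\infty$ inducing the same metric. Calabi's rigidity theorem then forces $f(\phi(S))$ to lie in a finite-dimensional projective subspace of $\mathds{C}{\rm P}^\infty$.

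The central step is the following lemma, which I would prove next: if $f\colon M\to\mathds{C}{\rm P}^\infty$ is transversally full, then the image $f(T)$ of every positive-dimensional K\"ahler submanifold $T\subset M$ is not contained in any finite-dimensional projective subspace of $\mathds{C}{\rm P}^\infty$. To prove the lemma, I would parameterize $T$ locally near a base point as $w\mapsto(\phi_1(w),\dots,\phi_d(w))$; since the inclusion is an immersion, some component $\phi_j$ is a nonconstant holomorphic function of $w$. Transversal fullness provides that the coordinate functions of $f|_{M_j}$, namely $\{f_\alpha(z_je_j)\}_\alpha$, are linearly independent as holomorphic functions of $z_j$. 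Substituting $z_j=\phi_j(w)$ and carefully tracking the lowest-order Taylor contributions in $w$ produces infinitely many linearly independent coordinate functions of $f\circ\phi$, contradicting the finite-dimensional containment established above.

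The main obstacle is precisely this Taylor-tracking step. When the base point $\phi(p_0)$ is not the origin of the global coordinate system on $M$, the transversal fullness hypothesis is only phrased along the axes through the origin and must somehow be transferred; and even at the origin the other components $\phi_i$ with $i\ne j$ may contribute mixed monomials that interact with the pure-$\phi_j$ contributions. I would resolve these subtleties by appealing to the local, germwise nature of the relative condition\,---\,characterized by Calabi's diastasis, which pulls back through K\"ahler immersions\,---\,to shrink $S$ so that $\phi(p_0)=0$, and then by a graded-degree analysis of the expansion of $e^{D}-1$ (where $D$ is the diastasis of $(M,g)$ at the origin) isolating the pure monomials along each axis direction from the mixed monomials.
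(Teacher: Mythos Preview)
Your overall plan matches the paper's: reduce via Theorem~\ref{hbd} to showing that $(M,g)$ is not relative to any $\mathds{C}{\rm P}^n$, take a one-dimensional common piece with holomorphic maps $\varphi\colon U\to M$ and $\psi\colon U\to\mathds{C}{\rm P}^n$, and derive a contradiction with Calabi rigidity (Theorem~\ref{local rigidityb}) by arguing that $f\circ\varphi$ is full in $\mathds{C}{\rm P}^\infty$. The paper's argument runs: reindex so that $\partial\varphi_1/\partial\xi(0)\neq 0$; transversal fullness supplies, for each $m$, components $f_{j_1},\dots,f_{j_m}$ whose restrictions to $M_1$ are linearly independent; since $\varphi_1$ is nonconstant, the one-variable functions $f_{j_k}(\varphi_1(\xi))$ remain linearly independent, and from this the paper declares $f\circ\varphi$ full.

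You are right to isolate the two delicate points---the location of $\varphi(p_0)$ and the contribution of the other components $\varphi_i$, $i\neq 1$---that the paper's terse argument does not explicitly address. However, your proposed fix for the first point is not valid: you cannot ``shrink $S$ so that $\phi(p_0)=0$'', because the origin is the fixed center of the global coordinates on $M$ with respect to which the axes $M_j$ (and hence transversal fullness) are defined, and nothing guarantees that the image $\phi(S)\subset M$ meets it; the local nature of the relative condition lets you shrink $S$, not translate its image inside $M$. The paper sidesteps this issue by never relocating the base point: linear independence of the holomorphic functions $f_{j_k}|_{M_1}$ persists after composition with any nonconstant holomorphic $\varphi_1$, irrespective of the value $\varphi_1(0)$. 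What both your outline and the paper leave open is precisely your second point---passing from independence of $f_{j_k}(\varphi_1(\xi),0,\dots,0)$ to independence of the actual components $f_{j_k}(\varphi_1(\xi),\dots,\varphi_d(\xi))$ of $f\circ\varphi$; your graded-degree idea is a reasonable attack, but as you describe it, it relies on $\phi(p_0)=0$, which you cannot assume.
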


As a consequence of Theorem \ref{hbd} and Theorem \ref{trfull} we get that the $1$-parameter families of Bergman--Hartogs and Fock--Bargmann--Hartogs domains, which we describe in Section \ref{bbh}, are strongly not relative to any projective K\"ahler manifold (see corollaries \ref{chrel} and \ref{fbhrel} below).\\

The paper consts of three more sections. In the first one we briefly recall the definition of diastasis function and its properties we need and in the second one we prove Theorem \ref{hbd} and Theorem \ref{trfull}. Finally, in the third and last section we apply our results to Bergman--Hartogs and Fock--Bargmann--Hartogs domains.

The author is very grateful to Prof. Andrea Loi for all the interesting discussions and comments that helped her to improve the contents and the exposition.
\section{Calabi's diastasis function}
Consider a real analytic K\"ahler manifold $(M,g)$ and let $\varphi\!: U\rightarrow \mathds{R}$ be a K\"ahler potential for $g$ defined on a coordinate neighborhood $U$ around a point $p\in M$. Consider the analytic extension $\tilde\varphi\!: W\rightarrow \mathds{R}$, $\tilde\varphi(z,\bar z)=\varphi(z)$, of $\varphi$ on a neighborhood $W$ of the diagonal in $U\times \bar U$. The {\em diastasis function} ${\rm D}(z,w)$ is defined by the formula:
\begin{equation}\label{diast}
{\rm D}(z,w):=\tilde\varphi(z,\bar z)+\tilde\varphi(w,\bar w)-\tilde\varphi(z,\bar w)-\tilde\varphi(w,\bar z).
\end{equation}
Observe that since:
$$
\frac{\partial^2}{\partial z\partial \bar z}{\rm D}(z,w)=\frac{\partial^2}{\partial z\partial \bar z}\tilde \varphi\left(z,\bar z\right)=\frac{\partial^2}{\partial z\partial \bar z}\varphi\left(z\right),
$$
once one of its two entries is fixed, the diastasis is a K\"ahler potential for $g$. We denote by ${\rm D}_0(z)$ the diastasis centered at the origin. The following theorem due to Calabi \cite{calabi}, expresses the diastasis' property which is fundamental for our purpose.
\begin{theor}[E. Calabi]\label{induceddiast}
 Let $(M, g)$ and $(S,G)$ be K\"ahler manifolds and assume $G$ to be real analytic. Denote by $\omega$ and $\Omega$ the K\"ahler forms associated to $g$ and $G$ respectively. If there exists a holomorphic map $f\!:(M,g)\rightarrow (S,G)$ such that $f^*\Omega=\omega$, then the metric $g$ is real analytic. Further, denoted by ${\rm D}^M_p\!:U\rightarrow \mathds R$ and ${\rm D}^S_{f(p)}\!:V\rightarrow\mathds R$ the diastasis functions of $(M,g)$ and $(S,G)$ around $p$ and $f(p)$ respectively, we have ${\rm D}_{f(p)}^S\circ f={\rm D}^M_p$ on $f^{-1}(V)\cap U$.
\end{theor}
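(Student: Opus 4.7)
The proof rests on the observation that the defining formula (\ref{diast}) for the diastasis is manifestly invariant under pullback by holomorphic maps. Since $G$ is real analytic, I would first fix a real analytic local K\"ahler potential $\Phi$ for $G$ on a neighborhood $V$ of $f(p)$. Because $f$ is holomorphic, $\varphi := \Phi \circ f$ is real analytic on $f^{-1}(V)\cap U$, and $f^*\Omega = \omega$ together with the commutation of pullback with $\partial\bar\partial$ give that $\varphi$ is a local K\"ahler potential for $g$. Since $p$ was arbitrary, this already shows that $g$ admits a real analytic local potential at every point, hence is real analytic.

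For the identity ${\rm D}^S_{f(p)} \circ f = {\rm D}^M_p$ I would next unfold the analytic extensions. Because $f$ is holomorphic, $\overline{f(z)}$ depends only on $\bar z$, so the polarization of $\varphi$ with respect to the $M$-coordinates is
$$\tilde\varphi(z, \bar w) = \tilde\Phi\bigl(f(z), \overline{f(w)}\bigr).$$
Applying (\ref{diast}) to the potential $\Phi$ at the basepoint $f(p)$ and then substituting $q = f(z)$ yields
$${\rm D}^S_{f(p)}(f(z)) = \tilde\Phi(f(z),\overline{f(z)}) + \tilde\Phi(f(p),\overline{f(p)}) - \tilde\Phi(f(z),\overline{f(p)}) - \tilde\Phi(f(p),\overline{f(z)}).$$
Reading each of the four terms through the formula for $\tilde\varphi$ above collapses the right-hand side to $\tilde\varphi(z,\bar z) + \tilde\varphi(p,\bar p) - \tilde\varphi(z,\bar p) - \tilde\varphi(p,\bar z)$, which is precisely ${\rm D}^M_p(z)$ by (\ref{diast}).

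The only subtlety is the classical fact that the diastasis depends only on the metric, and not on the choice of local K\"ahler potential used in (\ref{diast}): replacing $\Phi$ by $\Phi + h + \bar h$ with $h$ holomorphic does not alter the right-hand side, since the contributions coming from $h$ and from $\bar h$ each cancel among the four terms. This is what frees us to pick $\Phi$ real analytic at the outset. Beyond this invariance remark the argument is a direct substitution, so there is no serious obstacle: the content of Calabi's theorem is precisely the observation that the symmetric four-term combination in (\ref{diast}) commutes with holomorphic pullback.
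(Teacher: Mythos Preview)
The paper does not actually supply a proof of this statement: Theorem~\ref{induceddiast} is quoted as a result of Calabi, with the reference to \cite{calabi}, and is used as a black box in the arguments that follow. So there is no in-paper proof to compare against.

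That said, your argument is correct and is essentially Calabi's original one. The two ingredients you isolate---that $\Phi\circ f$ is a real analytic potential for $g$ because pullback by a holomorphic map commutes with $\partial\bar\partial$, and that the polarization of $\Phi\circ f$ factors as $\tilde\Phi(f(z),\overline{f(w)})$ because $f$ is holomorphic---are exactly the content of the theorem, and your verification that the four-term combination \eqref{diast} is unchanged under $\Phi\mapsto\Phi+h+\bar h$ is the standard justification that the diastasis is intrinsic to the metric. There is no gap.
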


Consider the complex projective space $\mathds{C}{\rm P}^N_b$ of complex dimension $N\leq \infty$, with the Fubini-Study metric $g_{b}$ of holomorphic bisectional curvature $4b$ for $b>0$. When $b=1$ we denote by $g_{FS}$ and $\omega_{FS}$ the Fubini-Study metric and the Fubini-Study form respectively. Let $[Z_0,\dots,Z_N]$ be homogeneous coordinates,
$p=[1,0,\dots,0]$ and $U_0=\{Z_0\neq 0\}$. Define affine coordinates $z_1,\dots, z_N$ on $U_0$ by $z_j=Z_j/(\sqrt{b}Z_0)$. The diastasis on $U_0$ centered at the origin reads:
\begin{equation}\label{diastcp}
{\rm D}^b_0(z)=\frac{1}{b}\log\left(1+b\sum_{j=1}^N|z_j|^2\right).
\end{equation}

Due to Th. \ref{induceddiast} and the expression of $\mathds{C}{\rm P}^N_b$'s diastasis \eqref{diastcp}, if $f\!:S\rightarrow \mathds{C}{\rm P}^N_b$ is a holomorphic map, $f(z)=[f_0(z),f_1(z),\dots, f_N(z)]$, then the induced diastasis ${\rm D}^S_0$ in a neighborhood of a point $p\in S$ is given by:
$$
{\rm D}^S_0(z)=\frac{1}{b}\log\left(1+b\sum_{j=1}^N|f_j(z)|^2\right).
$$
Further, if the K\"ahler map $f$ is assumed to be {\em full}, i.e. the image $f(S)$ is not contained into any lower dimensional totally geodesic submanifold of $\mathds{C}{\rm P}_b^N$, then $f$ is univocally determined up to rigid motion of $\mathds{C}{\rm P}^N_b$ \cite[pp. 18]{calabi}:
\begin{theor}[Calabi's Rigidity]\label{local rigidityb}
If a neighborhood $V$ of a point $p$ admits a full K\"ahler immersion into $(\mathds{C}{\rm P}^N_b,g_b)$, then $N$ is univocally determined by the metric and the immersion is unique up to rigid motions of $(\mathds{C}{\rm P}^N_b,g_b)$.
\end{theor}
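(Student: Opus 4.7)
My plan is to reconstruct the immersion $f$, up to rigid motion, from the diastasis of $(V,g)$. First I would compose with a rigid motion of $(\mathds{C}{\rm P}^N_b,g_b)$ to send $f(p)$ to $[1,0,\dots,0]$, so that Theorem \ref{induceddiast} together with formula \eqref{diastcp} yields, in the affine chart $U_0$, the identity
$$
b\sum_{j=1}^N|f_j(z)|^2 = e^{b\,{\rm D}^V_p(z)} - 1.
$$
Since the right-hand side depends only on the metric $g$, I would polarize it to a bihomogeneous series $\sum_{\alpha,\beta}c_{\alpha\beta}\,z^\alpha\bar w^\beta$ and read off the infinite Hermitian matrix $C=(c_{\alpha\beta})$; matching with the expansion of the left-hand side, $C=b\,A^*A$, where the rows of $A$ are the Taylor coefficients of the components $f_j$. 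Fullness of $f$ forces linear independence of these rows, hence $\operatorname{rank}C=N$. This proves the first assertion that $N$ is intrinsic to $g$.

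For uniqueness, given a second full immersion $f'\!:V\to(\mathds{C}{\rm P}^N_b,g_b)$, I would pre-compose it with a rigid motion carrying $f'(p)$ to $f(p)$, so that both $f$ and $f'$ satisfy the displayed identity with the same right-hand side and therefore
$$
\sum_{j=1}^N|f_j(z)|^2=\sum_{j=1}^N|f'_j(z)|^2.
$$
The core step is then a linear-algebra lemma on germs of holomorphic functions vanishing at the origin: two linearly independent $N$-tuples $(f_j)$ and $(f'_j)$ with the same sum of squared moduli must be related by some $U\in U(N)$, namely $f'_j=\sum_k U_{jk}f_k$. Such a $U$ lifts to an element of $U(N+1)$ fixing $[1,0,\dots,0]$, which acts as a rigid motion of $(\mathds{C}{\rm P}^N_b,g_b)$ carrying $f$ to $f'$, completing the proof.

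The step I expect to be the main obstacle is the extraction of $U$ from the equality of sums of squared moduli. Concretely, one must show that the common Hermitian kernel $C=b\,A^*A=b\,A'^*A'$ admits an essentially unique decomposition into rank-one Hermitian summands of the form $|f_j|^2$, which amounts to diagonalizing $C$ via its Taylor coefficients and matching the two resulting orthonormal systems of holomorphic germs. Fullness is used here in a decisive way: it ensures that the diagonalizations of $A$ and $A'$ produce exactly $N$ non-trivial eigen-directions each, so that $U$ is genuinely a unitary matrix on an $N$-dimensional space and not merely a partial isometry between spaces of different dimension; this is also where the rigidity of $N$ re-enters to make the construction of $U$ possible.
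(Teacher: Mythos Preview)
The paper does not supply its own proof of this statement: Theorem~\ref{local rigidityb} is quoted from Calabi's original paper \cite[pp.~18]{calabi} and used as a black box, so there is no in-paper argument to compare against. Your sketch is, however, a faithful outline of Calabi's classical proof: normalize so that $f(p)=[1,0,\dots,0]$, read off from Theorem~\ref{induceddiast} and \eqref{diastcp} the identity $e^{b\,{\rm D}^V_p}-1=b\sum_j|f_j|^2$, polarize to obtain the Hermitian coefficient matrix $C$, and recover $N=\operatorname{rank} C$ together with the immersion up to $U(N)$ from the factorization $C=b\,A^*A$.

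The only step where your write-up is slightly loose is the ``linear-algebra lemma'' you flag yourself. The clean way to carry it out is not to diagonalize $C$ but to work directly with the two factorizations: if $(f_j)$ and $(f'_j)$ are each linearly independent and $\sum_j f_j\overline{f_k}=\sum_j f'_j\overline{f'_k}$ after polarization, expand each $f'_j$ in the basis $(f_k)$ and read off from the resulting Gram identity that the change-of-basis matrix is unitary. Fullness is exactly what guarantees that both families span the same $N$-dimensional space of germs, so the matrix is square. With that clarification your proposal is correct and is essentially Calabi's argument; since the paper itself gives no proof, there is nothing further to contrast.
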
    
Observe that by Th. \ref{local rigidityb} above, a K\"ahler manifold which is infinite projectively induced does not admit a K\"ahler immersion into any finite dimensional complex projective space.

\section{Proof of Theorems \ref{hbd} and \ref{trfull}}
\begin{proof}[Proof of Theorem \ref{hbd}]
Observe first that due to Th. \ref{induceddiast} it is enough to prove that $(M, c\,g)$ is not relative to $\mathds{C}{\rm P}^n$ for any finite $n$ and any $c>0$. For any $c>0$, we can choose a positive integer $\alpha$ such that  $c\alpha>\beta_0$. Denote by $\omega$ the K\"ahler form on $M$ associated to $g$. Let $F\!:M\rightarrow \mathds{C}{\rm P}^\infty$ be a full K\"ahler map such that $F^*\omega_{FS}=c\alpha\, \omega$. Then $\tilde F=F/\sqrt{\alpha}$ is a K\"ahler map of $(M,c\, g)$ into $ \mathds{C}{\rm P}^\infty_\alpha$.
 Let $S$ be a common K\"ahler submanifold of $(M,c\,g)$ and $\mathds{C}{\rm P}^n$. Then by Th. \ref{induceddiast} for any $p\in S$ there exist a neighborhood $U$ and two holomorhic maps $f\!:U\rightarrow M$ and $h\!:U\rightarrow \mathds{C}{\rm P}^n$, such that $f^*(c\omega)|_U=( \tilde F\circ f)^*\omega_{FS}|_U=h^*\omega_{FS}|_U$.
 
Thus, by \eqref{diastcp} one has:
$$
\log\left(1+\sum_{j=1}^n|h_j|^2\right)=\frac1\alpha\log\left(1+\sum_{j=1}^\infty|(F\circ f)_j)|^2\right).
$$
i.e.:
\begin{equation}\label{contradiction}
\alpha\log\left(1+\sum_{j=1}^n|h_j|^2\right)=\log\left(1+\sum_{j=1}^\infty|(F\circ f)_j)|^2\right).
\end{equation}
Since $F\circ f$ is full and $\alpha$ is a positive integer, this last equality and Calabi rigidity Theorem \ref{local rigidityb} imply $n=\infty$.
\end{proof}

\begin{proof}[Proof of Theorem \ref{trfull}]
Due to Th. \ref{hbd} and Th. \ref{induceddiast} we need only to prove that a if a K\"ahler manifold is infinite projectively induced through a  transversally full immersion then it is not relative to $\mathds{C}{\rm P}^n$ for any $n$. 
Assume that $S$ is a $1$-dimensional K\"ahler submanifold of both $\mathds{C}{\rm P}^n$ and $(M, g)$.
Then around each point $p\in S$ there exist an open neighborhood $U$ and  two holomorphic maps $\psi\!:U\rightarrow \mathds{C}{\rm P}^n$ and $\varphi\!:U\rightarrow M$, $\varphi(\xi)=(\varphi_1(\xi),\dots,\varphi_d(\xi))$ where $\xi$ are coordinates on $U$, such that $\psi^*\omega_{FS}|_U=\varphi^*(c\omega)|_U$. Without loss of generality we can assume $\frac{\partial\varphi_1(\xi)}{\partial \xi}(0)\neq 0$.
 Let $f\!:M\rightarrow \mathds{C}{\rm P}^\infty$ be a K\"ahler map from $(M, g)$ into $\mathds{C}{\rm P}^\infty$. Since by assumption $f$ is transversally full, $f=[f_0,\dots, f_j,\dots]$ contains for any $m=1,2,3,\dots$, a subsequence $\left\{f_{j_1},\dots, f_{j_m}\right\}$ of functions which restricted to $M_1$ are linearly independent.
The map $f\circ\varphi\!:U\rightarrow \mathds{C}{\rm P}^\infty$ is full, in fact $f|_{M_1}\circ \varphi$ is full since $\varphi_1(\xi)$ is not constant and for any $m=1,2,3,\dots$, $\left\{f_{j_1}(\varphi_1(\xi)),\dots, f_{j_m}(\varphi_1(\xi))\right\}$ is a subsequence of $\{f|_{M_1}\circ \varphi\}$ of linearly independent functions.  Conclusion follows by Calabi's rigidity Theorem \ref{local rigidityb}.
\end{proof}

\section{Applications}\label{bbh}

Let $(\Omega, \beta g_B)$, $\beta >0$, denote a bounded domain of $\mathds{C}^d$ endowed with a positive multiple of its Bergman metric $g_B$. Recall that $g_B$ is the K\"ahler metric on $\Omega$ whose associated K\"ahler form $\omega_B$ is given by $\omega_B=\frac{i}{2}\partial\bar\partial\log \K(z,z)$, where  $\K(z,z)$ is the reproducing kernel for the Hilbert space:
$$\mathcal{H} =\left\{\varphi\in{\rm hol}(\Omega),\ \int_\Omega |\varphi|^{2}\ \frac{\omega_0^d}{d!}<\infty\right\},$$
where $\omega_0=\frac{i}{2} \sum_{j=1}^d dz_j\wedge d\bar z_j$ is the standard K\"ahler form of $\mathds{C}^d$. It follows by \eqref{diast} that the diastasis function for $g_B$ is given by:
\begin{equation}\label{diastomega}
{\rm D}_0^\Omega(z)=\log\frac{\K(z,z)\K(0,0)}{|\K(z,0)|^2}.
\end{equation}
Observe that the Bergman metric $g_B$ admits a natural K\"ahler immersion into the infinite dimensional complex projective space (cfr. \cite{kodomain}). More precisely, if $\K(z,z)=\sum_{j=0}^\infty |\varphi_j(z)|^2$, the map:
\begin{equation}\label{bergmanimm}
\varphi\!:\Omega\rightarrow\mathds{C}{\rm P}^\infty,\quad \varphi=(\varphi_0,\dots, \varphi_j,\dots),
\end{equation}
is a K\"ahler immersion of $(\Omega, g_B)$ into $\mathds{C}{\rm P}^\infty$, for $\varphi^*g_{FS}=g_B$, as it follows by:
$$
\omega_B=\frac i2\partial\bar\partial\log(\K(z,z))=\frac i2\partial\bar\partial\log\left(\sum_{j=0}^\infty |\varphi_j(z)|^2\right)=\varphi^*\omega_{FS}.
$$
Further, such immersion is full since $\{\varphi_j\}$ is a basis for the Hilbert space $\mathcal H$ and a bounded  domain does not admit a K\"ahler immersion into a finite dimensional complex projective space even when the metric is rescaled. Although the existence of a K\"ahler immersion of $(\Omega, \beta g_B)$ into $\mathds{C}{\rm P}^\infty$ is strictly related to the constant $\beta$ which multiplies the metric (see \cite{articwall} for the case when $\Omega$ is symmetric). In \cite{ishi} it is proven that the only homogeneous bounded domain which is projectively induced for all positive values of the constant multiplying the metric is a product of complex hyperbolic spaces.
Although, the property of being projectively induced for a large enough constant is not so unusual and the following holds \cite{loimossaber}:
\begin{theor}[A. Loi, R. Mossa]\label{loimossaimm}
Let $(\Omega,g)$ be a homogeneous bounded domain. Then, there exists $\alpha_0>0$ such that $(\Omega,\alpha g)$ is projectively induced for any $\alpha\geq\alpha_0>0$. 
\end{theor}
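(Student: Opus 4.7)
The plan is to translate projective inducedness of $(\Omega,\alpha g)$ into a positivity statement about powers of the Bergman kernel and then invoke the theory of the generalized Wallach set for homogeneous domains. By Th.~\ref{induceddiast} combined with the full expansion supplied by~\eqref{bergmanimm}, the property that $(\Omega,\alpha g)$ is projectively induced is equivalent to the existence of a convergent expansion $e^{\alpha D_0^\Omega(z)}-1=\sum_j|\psi_j^{(\alpha)}(z)|^2$ with holomorphic functions $\psi_j^{(\alpha)}$ defined near the base point. Since on an irreducible homogeneous bounded domain every invariant K\"ahler metric is proportional to the Bergman metric $g_B$ (and a product argument handles the reducible case factor by factor), formula~\eqref{diastomega} reduces the problem to showing that a suitable power $\K(z,w)^{\gamma}$ of the Bergman kernel is a positive-definite reproducing kernel for every $\gamma$ in an unbounded interval $[\gamma_0,\infty)$.

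First I would invoke the Vinberg--Gindikin--Pyatetskii-Shapiro realization of $\Omega$ as an affine-homogeneous Siegel domain of type II over a homogeneous convex cone, together with the associated normal $J$-algebra structure. In these coordinates the Bergman kernel admits an explicit multiplicative decomposition in terms of generalized power functions on the Peirce blocks of the cone times an exponential contribution from the vector fiber, which makes the analytic continuation of $\K(z,w)^\gamma$ tractable via a Gindikin-type Gamma function on the cone. Then I would invoke the analogue, for homogeneous bounded domains, of the classical Faraut--Kor\'anyi description of the Wallach set of a bounded symmetric domain, which asserts that the set of exponents $\gamma$ for which $\K^\gamma$ is positive definite always contains a half-line $[\gamma_0,\infty)$. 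An orthonormal basis $\{\psi_j^{(\gamma)}\}$ of the weighted Bergman space whose reproducing kernel is $\K^\gamma$ then yields, by the same computation as the one leading from $\sum|\varphi_j|^2=\K(z,z)$ to $\varphi^*\omega_{FS}=\omega_B$ in~\eqref{bergmanimm}, a full holomorphic map $[\psi_0^{(\gamma)},\psi_1^{(\gamma)},\dots]\!:\Omega\to\mathds{C}{\rm P}^\infty$ whose pull-back of $\omega_{FS}$ is $\gamma\,\omega_B$.

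The main obstacle I expect is the positivity step in the non-symmetric case, where one cannot simply appeal to the Hua-type polynomial identities available on tube or bounded symmetric domains and must instead exploit the $J$-algebraic and Peirce decomposition block by block; verifying non-triviality and positive-definiteness of the weighted Bergman space for every sufficiently large exponent is the delicate point, and locating $\gamma_0$ requires precise control of the Gindikin Gamma function of the cone. Once this unbounded half-line of admissible exponents has been produced, the identification of the RKHS kernel expansion with a Calabi-style expansion of $e^{\gamma D_0^\Omega}$ via~\eqref{diastomega} is routine, and the final rescaling $\alpha_0=\gamma_0/c$, where $c>0$ is the proportionality constant between $g$ and $g_B$, completes the argument.
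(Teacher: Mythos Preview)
The paper does not actually prove this theorem: it is quoted from \cite{loimossaber} and used as a black box, so there is no ``paper's own proof'' to match. Your outline is, in spirit, the argument of \cite{loimossaber}: realize the homogeneous bounded domain as a Siegel domain of type~II \`a la Vinberg--Gindikin--Pyatetskii-Shapiro, express the relevant potential through generalized power functions on the underlying homogeneous cone, and use the Gindikin Gamma function to obtain a half-line of exponents for which the corresponding kernel is positive definite; then Calabi's criterion converts this into projective inducedness.

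One point deserves care. Your reduction ``on an irreducible homogeneous bounded domain every invariant K\"ahler metric is proportional to $g_B$'' is not valid beyond the symmetric case: for a non-symmetric homogeneous bounded domain the isotropy representation need not be irreducible, and there is typically a multi-parameter family of homogeneous K\"ahler metrics (parametrized, in the $J$-algebra picture, by admissible linear forms on the split solvable Lie algebra). The argument in \cite{loimossaber} does not pass through such a proportionality; it works directly with an arbitrary homogeneous K\"ahler metric by writing its potential in the Siegel model and showing that, after multiplication by a large constant, the associated kernel lies in the Gindikin/Ishi ``Wallach'' set. So your Siegel-domain strategy is right, but you should drop the ``$g=c\,g_B$'' shortcut and instead run the Gindikin positivity analysis for the given homogeneous potential itself; once that is done, the passage from the reproducing-kernel expansion to a full K\"ahler immersion into $\mathds{C}{\rm P}^\infty$ via \eqref{bergmanimm} goes through exactly as you describe.
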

Notice that it is an open question if the same statement holds dropping the homogeneous assumption. 

Regarding the property of being relative to some projective K\"ahler manifold, we recall the following result due to A. J. Di Scala and A. Loi in \cite{relatives}, which plays a key role in the proof of Corollary \ref{chrel}.
\begin{theor}[A. J. Di Scala, A. Loi ]\label{loidiscalahbd}
A bounded domain of $\mathds{C}^n$ endowed with its Bergman metric and a projective K\"ahler manifold are not relatives.
\end{theor}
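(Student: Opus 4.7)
The plan is to argue by contradiction using the diastasis theorem \ref{induceddiast} together with Calabi's rigidity \ref{local rigidityb}. Suppose a bounded domain $(\Omega,g_B)\subset\mathds{C}^n$ and a projective K\"ahler manifold $(P,g_P)$ share a K\"ahler submanifold $S$. Near a point $p\in S$, composing a local immersion $S\to P$ with a local K\"ahler immersion $P\to\mathds{C}{\rm P}^N$ (for some finite $N$) gives a neighborhood $U$ of $p$ and holomorphic immersions $f\colon U\to\Omega$ and $h\colon U\to\mathds{C}{\rm P}^N$ with $f^*\omega_B=h^*\omega_{FS}$ on $U$. After translating in $\mathds{C}^n$ we may arrange $f(p)=0\in\Omega$, so Theorem \ref{induceddiast} then forces ${\rm D}_0^\Omega\circ f = {\rm D}_0^{\mathds{C}{\rm P}^N}\circ h$ on $U$.

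To exploit this identity I would pick the orthonormal basis $\{\varphi_j\}_{j\geq 0}$ of the Bergman Hilbert space $\mathcal{H}$ so that $\varphi_0(0)\neq 0$ and $\varphi_j(0)=0$ for all $j\geq 1$ (take $\varphi_0$ proportional to the reproducing-kernel representer at $0$ and Gram--Schmidt the rest in its orthogonal complement). Then $\K(z,0)=\overline{\varphi_0(0)}\,\varphi_0(z)$, the Bergman diastasis \eqref{diastomega} reduces to $e^{{\rm D}_0^\Omega(z)} = 1+\sum_{j\geq 1}|\varphi_j(z)/\varphi_0(z)|^2$, and combining with \eqref{diastcp} and exponentiating, the equality of diastases becomes the pointwise identity
\begin{equation*}
\sum_{j\geq 1}\left|\frac{\varphi_j(f(\xi))}{\varphi_0(f(\xi))}\right|^2 \;=\; \sum_{j=1}^N |h_j(\xi)|^2 \qquad \text{on } U.
\end{equation*}
Theorem \ref{local rigidityb}, applied after restricting each side to the minimal projective subspace containing the image of the corresponding homogeneous map $\xi\mapsto[\varphi_0(f(\xi)):\varphi_1(f(\xi)):\dots]$ or $\xi\mapsto[1:h_1(\xi):\dots:h_N(\xi)]$, then yields that these two minimal subspaces have the same finite dimension $\leq N$; hence the $\mathds{C}$-linear span of $\{\varphi_j\circ f\}_{j\geq 0}$ inside $\mathcal{O}(U)$ has dimension at most $N+1$.

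The contradiction then comes from infinite-dimensionality of $\mathcal{H}$: because $\Omega$ is bounded every coordinate monomial $z^\alpha$ lies in $\mathcal{H}$, and because $f$ is a holomorphic immersion at least one component $f_i$ is nonconstant on the connected set $U$, so the powers $1,f_i,f_i^2,\dots$ are linearly independent elements of $f^*\mathcal{H}=\mathrm{span}\{\varphi_j\circ f\}$, contradicting the bound just obtained. I expect the rigidity step to be the main delicacy, because the Bergman-side map into $\mathds{C}{\rm P}^\infty$ is \emph{a priori} not full on $U$ and Theorem \ref{local rigidityb} cannot be invoked directly; one has to first identify the minimal projective subspace containing its image and verify that the restricted map remains a full K\"ahler immersion pulling back $\omega_{FS}$ to the same K\"ahler form, before comparing dimensions with the automatically finite-dimensional $h$-side.
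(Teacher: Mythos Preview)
This theorem is not proved in the present paper; it is quoted from \cite{relatives} and used only as input for Corollary~\ref{chrel}. There is therefore no in-paper argument to compare against, and your outline is essentially the strategy of Di~Scala--Loi: pull the common submanifold back to both the Bergman embedding $\Omega\hookrightarrow\mathds{C}{\rm P}^\infty$ and the finite projective side, equate diastases, and let Calabi rigidity force a dimension clash. Your handling of the ``not full on $U$'' issue by passing to the minimal containing subspace is the right manoeuvre.

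The one place that needs tightening is the final contradiction, where you write $f^*\mathcal{H}=\mathrm{span}\{\varphi_j\circ f\}$ and then locate the powers $f_i^k$ inside that span. For a general $\psi=\sum_j c_j\varphi_j\in\mathcal{H}$ the Bergman series converges only locally uniformly on $\Omega$, so $\psi\circ f$ lies a priori merely in the \emph{closure} of $\mathrm{span}\{\varphi_j\circ f\}$ inside the Fr\'echet space $\mathcal{O}(U)$, not automatically in the algebraic span. What rescues the argument is precisely the step you have just carried out: the rigidity comparison with the $h$-side forces $\mathrm{span}\{\varphi_j\circ f\}$ to be finite-dimensional, and a finite-dimensional subspace of $\mathcal{O}(U)$ is closed, so closure and span coincide. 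Once that is said, $z_i^k\in\mathcal{H}$ gives $f_i^k\in\mathrm{span}\{\varphi_j\circ f\}$ for every $k$, and since $f_i$ is nonconstant these are linearly independent, yielding the desired contradiction. Make this closure step explicit and the proof is complete.
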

Observe that due to theorems \ref{loimossaimm} and \ref{loidiscalahbd}, Theorem \ref{hbd} implies that a bounded domain of $\mathds{C}^n$ endowed with its Bergman metric and a projective K\"ahler manifold are {\em strongly} not relatives. Althought, this result has been proven in a more general context by R. Mossa in \cite{mossa}, where he shows that a homogeneous bounded domain and a projective K\"ahler manifold are not relatives.\\

Let us now describe the family of Bergman--Hartogs domains. 
For all positive real numbers $\mu$ a {\em Bergman-Hartogs domain} is defined by:
\begin{equation}\label{defm}
M_{\Omega}(\mu)=\left\{(z,w)\in \Omega\times\mathds{C},\ |w|^2<\tilde \K(z, z)^{-\mu}\right\},
\end{equation}
where $\tilde \K(z, z)=\frac{\K(z,z)\K(0,0)}{|\K(z,0)|^2}$ with $\K$ the Bergman kernel of $\Omega$.
Consider on $M_{\Omega}(\mu)$ the metric $g(\mu)$  whose associated K\"ahler form $\omega(\mu)$ can be described by the (globally defined) K\"ahler potential centered at the origin
\begin{equation}\label{diastM}
\Phi(z,w)=-\log(\tilde\K(z, z)^{-\mu}-|w|^2).
\end{equation}
The domain $\Omega$ is called  the {\em  base} of the Bergman--Hartogs domain 
$M_{\Omega}(\mu)$ (one also  says that 
$M_{\Omega}(\mu)$  is based on $\Omega$). Observe that these domains include and are a natural generalization of Cartan--Hartogs domains which have been studied under several points of view (see e.g. \cite{fengtubalanced,berezinCH} and references therein). To the author knowledge, Bergman-Hartogs domains has been already considered in \cite{hao,hao2,hao3}.

In \cite{articwall} the author of the present paper jointly with A. Loi proved that when the base domain is symmetric $(M_{\Omega}(\mu),c\,g(\mu))$ admits a K\"ahler immersion into the infinite dimensional complex projective space if and only if $(\Omega, (c+m)\mu g_B)$ does for every integer $m\geq0$. As pointed out in \cite{hao}, a totally similar proof holds also when the base is a homogeneous bounded domain. This fact together with Theorem \ref{loimossaimm} proves that a Bergman--Hartogs domain $(M_{\Omega}(\mu),c\,g(\mu))$ is projectively induced for all large enough values of the constant $c$ multiplying the metric. Further, the immersion can be written explicitely as follows (cfr. \cite[Lemma 8]{balancedch}):
\begin{lemma}\label{chimm}
Let $\alpha$ be a positive real number such that the Bergman--Hartogs domain $(M_{\Omega}(\mu),\alpha\, g(\mu))$ is projectively induced. Then, the K\"ahler map $f$ from $(M_{\Omega}(\mu),\alpha\,g(\mu))$ into $\mathds{C}{\rm P}^\infty$, up to unitary transformation of $\mathds{C}{\rm P}^\infty$, is given by:
\begin{equation}\label{immf}
f=\left[ 1, s, h_{\mu\, \alpha},\dots,\sqrt{\frac{(m+ \alpha-1)!}{(\alpha-1)!m!}}h_{\mu(\alpha +m)}w^m,\dots\right],
\end{equation}
where $s=(s_1,\dots, s_m,\dots)$ with
$$s_m=\sqrt{\frac{(m+ \alpha-1)!}{(\alpha-1)!m!}}w^m,$$
and $h_k=(h_k^1,\dots,h_k^j,\dots)$ denotes the sequence of holomorphic maps on $\Omega$ such that the immersion $\tilde h_k=(1,h_k^1,\dots, h_k^j,\dots)$, $\tilde h_k\!:\Omega\rightarrow\mathds{C}{\rm P}^\infty$, satisfies $\tilde h_k^*\omega_{FS}=k \omega_B$, i.e. 
\begin{equation}%\label{ie}
1+\sum_{j=1}^{\infty}|h_k^j|^2=\tilde \K^{-k}.\nonumber
\end{equation} 
\end{lemma}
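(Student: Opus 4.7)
My plan is to identify the given potential $\Phi$ as the diastasis of $g(\mu)$ at the origin, expand $e^{\alpha \Phi}$ into a sum of squared moduli of holomorphic functions by combining a binomial series in $|w|^2$ with the $h_k$-decomposition of powers of $\tilde \K$, and then invoke Calabi's rigidity theorem \ref{local rigidityb} to conclude uniqueness up to a unitary transformation of $\mathds{C}{\rm P}^\infty$.

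The first step is to check that $\Phi(z,w) = -\log(\tilde \K^{-\mu} - |w|^2)$ is the diastasis of $g(\mu)$ centered at $(0,0)$. I would note that $\tilde \K^{-\mu} = e^{-\mu {\rm D}_0^\Omega}$, whose power expansion in $(z,\bar z)$ has no purely (anti)holomorphic monomials; subtracting $|w|^2$ and taking $-\log$ of the resulting function (which equals $1$ at the origin) preserves this property in the $(z,w,\bar z, \bar w)$ expansion. Hence the diastasis of $(M_\Omega(\mu), \alpha g(\mu))$ at the origin is $\alpha \Phi$, and by Theorem \ref{induceddiast} together with \eqref{diastcp} any K\"ahler immersion $f = [f_0, f_1, \dots]\!: M_\Omega(\mu) \to \mathds{C}{\rm P}^\infty$ normalized by $f(0,0) = [1, 0, \dots]$ satisfies $1 + \sum_{j \geq 1} |f_j/f_0|^2 = e^{\alpha \Phi}$.

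The main computation is then to expand
\[
(\tilde \K^{-\mu} - |w|^2)^{-\alpha} = \tilde \K^{\mu\alpha}\bigl(1 - |w|^2 \tilde \K^{\mu}\bigr)^{-\alpha} = \sum_{m=0}^\infty \binom{\alpha + m - 1}{m} |w|^{2m}\, \tilde \K^{\mu(\alpha + m)}
\]
via the generalized binomial theorem, and for each $m$ to invoke the defining relation of $h_k$ with $k = \mu(\alpha + m)$ to replace $\tilde \K^{\mu(\alpha+m)}$ by a constant plus $\sum_j |h_{\mu(\alpha+m)}^j|^2$. Collecting the $j$-independent pieces produces the scalar factor $\sum_{m \geq 0}\binom{\alpha+m-1}{m}|w|^{2m} = (1-|w|^2)^{-\alpha} = 1 + \sum_{m \geq 1} |s_m|^2$, which accounts for the components $\{1, s_1, s_2, \dots\}$ of \eqref{immf}; the remaining pieces contribute $\sum_{m \geq 0} \binom{\alpha + m - 1}{m} |w|^{2m} |h_{\mu(\alpha+m)}^j|^2$, which is precisely the sum of squared moduli of $\sqrt{\binom{\alpha+m-1}{m}}\, h_{\mu(\alpha+m)}^j w^m$ as listed in \eqref{immf}.

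For the uniqueness clause I would verify fullness of the constructed $f$: its restriction to $w = 0$ recovers the immersion $\tilde h_{\mu\alpha}$ of $\Omega$, which is full on $\Omega$ (since $\Omega$ is a bounded domain and a bounded domain does not admit a finite-dimensional projective immersion), so the image of $f$ cannot lie in any proper totally geodesic $\mathds{C}{\rm P}^{N<\infty} \subset \mathds{C}{\rm P}^\infty$. Calabi's rigidity theorem \ref{local rigidityb} then forces any other full K\"ahler immersion of $(M_\Omega(\mu), \alpha g(\mu))$ with the same pullback to differ from $f$ by a unitary transformation of $\mathds{C}{\rm P}^\infty$. The main obstacle is the combinatorial bookkeeping that guarantees the binomial coefficients from the expansion of $(\tilde \K^{-\mu} - |w|^2)^{-\alpha}$ repackage into precisely the $\sqrt{\binom{\alpha+m-1}{m}}$ factors of \eqref{immf}; the identification of $\Phi$ with the diastasis and the rigidity step are routine by comparison.
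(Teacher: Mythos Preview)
Your proposal is correct and follows essentially the same approach as the paper, which simply identifies $\Phi$ as the diastasis via \eqref{diast} and then refers to \cite[Lemma 8]{balancedch} for the binomial expansion you carry out explicitly. The only minor difference is in the fullness check: the paper observes that the monomials $s_1,\dots,s_m$ are linearly independent for every $m$, whereas you restrict to $w=0$ and invoke that a bounded domain admits no finite-dimensional projective immersion; both arguments are valid and equally short.
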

\begin{proof}
The proof follows essentially that of \cite[Lemma 8]{balancedch} once considered that $\Phi(z,w)=-\log(\tilde \K(z, z)^{-\mu}-|w|^2)$ is the diastasis function for $(M_\Omega(\mu), g(\mu))$ as follows readily applying \eqref{diast}.
\end{proof}

Observe that such map is full, as can be easily seen for example by considering that for any $m=1,2,3,\dots,$ the subsequence $\{s_1,\dots, s_m\}$ is composed by linearly independent functions.

As a consequence of theorems \ref{hbd}, \ref{trfull}, \ref{loidiscalahbd} and Lemma \ref{chimm}, we get the following:
\begin{cor}\label{chrel}
For any $\mu>0$, a Bergman--Hartogs domain $(M_\Omega(\mu), g(\mu))$ is strongly not relative to any projective manifold. 
\end{cor}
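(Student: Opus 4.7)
The plan is to apply Theorem \ref{trfull} to the rescaling $(M_\Omega(\mu), \alpha g(\mu))$ for some suitably large $\alpha$, and then transfer the conclusion back to $g(\mu)$ via the scale-invariance of the property ``strongly not relative''. Two hypotheses of Theorem \ref{trfull} must be verified: $(M_\Omega(\mu), \alpha g(\mu))$ is infinite projectively induced through a transversally full map, and all further rescalings $(M_\Omega(\mu), \beta \alpha g(\mu))$ for $\beta \geq 1$ remain infinite projectively induced.

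Both inducedness statements follow from the criterion recalled before Lemma \ref{chimm} together with Theorem \ref{loimossaimm}: there exists $\alpha_0 > 0$ such that $(M_\Omega(\mu), c\, g(\mu))$ is infinite projectively induced for every $c \geq \alpha_0$. I fix $\alpha \geq \alpha_0$; Lemma \ref{chimm} then yields the explicit K\"ahler map $f$ displayed in \eqref{immf}. The decisive step is to verify that $f$ is transversally full with respect to the natural global coordinates $(z_1, \dots, z_d, w)$. First observe that each $h_k^j$ vanishes at $z = 0$: the identity of Lemma \ref{chimm} combined with $\tilde\K(0,0) = 1$ forces $\sum_j |h_k^j(0)|^2 = 0$. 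Consequently, on $M_{d+1} = \{z = 0\}$ the map $f$ collapses to $w \mapsto [1, s_1(w), s_2(w), \dots]$ with each $s_m$ a nonzero scalar multiple of $w^m$, so the monomials $\{w^m\}_{m \geq 1}$ are linearly independent and $f|_{M_{d+1}}$ is full. On the submanifold $M_j = \{w = 0,\ z_i = 0 \text{ for } i \neq j\}$ with $j \leq d$, every summand with a positive power of $w$ vanishes and $f$ reduces to $z_j \mapsto [1, h_{\mu\alpha}^1(z_j), h_{\mu\alpha}^2(z_j), \dots]$; by Lemma \ref{chimm}, $1 + \sum_k |h_{\mu\alpha}^k|^2$ equals a positive power of $\tilde\K$, and restricted to the $z_j$-axis this is an unbounded real-analytic function since $\Omega$ is a bounded domain in $\mathds{C}^d$ and its Bergman kernel diagonal blows up at $\partial\Omega$. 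A convergent sum of squares of holomorphic functions cannot be unbounded while having only finitely many nonzero summands, so the $h_{\mu\alpha}^k|_{M_j}$ include infinitely many linearly independent functions, and $f|_{M_j}$ is full as well.

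Theorem \ref{trfull} then concludes that $(M_\Omega(\mu), \alpha g(\mu))$ is strongly not relative to any projective K\"ahler manifold; since this property is a universal statement over all positive rescalings of the metric, it transfers verbatim to $(M_\Omega(\mu), g(\mu))$. The hard part will be the transversal-fullness check along the base-domain axes $M_j$ for $j \leq d$, which ultimately rests on the boundary blow-up of the Bergman kernel of $\Omega$---the same mechanism underlying Theorem \ref{loidiscalahbd}.
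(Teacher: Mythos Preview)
Your argument for transversal fullness along the $w$-axis is fine and matches the paper's. The gap is in your treatment of the base-domain axes $M_j$, $j\leq d$. The assertion ``a convergent sum of squares of holomorphic functions cannot be unbounded while having only finitely many nonzero summands'' is simply false on a bounded domain: a single holomorphic function such as $(1-z_j)^{-1}$ on the unit disk is already unbounded. More to the point, even if only finitely many of the $h_{\mu\alpha}^k|_{M_j}$ were linearly independent, the resulting finite sum $\sum_{k}|h_{\mu\alpha}^k|^2$ could still blow up at $\partial M_j$, so the unboundedness of $\tilde\K^{\mu\alpha}|_{M_j}$ does not by itself force the restricted family to span an infinite-dimensional space. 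Your closing remark that this step ``rests on the same mechanism underlying Theorem~\ref{loidiscalahbd}'' is the right intuition, but the mechanism is \emph{not} mere boundary blow-up of the kernel; it is the stronger fact that no multiple of the Bergman metric on $\Omega$ can be pulled back from a finite $\mathds{C}{\rm P}^N$.

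The paper sidesteps this difficulty entirely. Rather than verifying transversal fullness in the $z$-directions, it observes that $g(\mu)|_{\{w=0\}}$ is a multiple of $g_B$, so by Theorem~\ref{hbd} combined with Theorem~\ref{loidiscalahbd} any common K\"ahler submanifold $S$ of $(M_\Omega(\mu),\alpha g(\mu))$ and $\mathds{C}{\rm P}^n$ cannot sit inside $\{w=0\}$. One may therefore assume the $w$-component of the parametrizing map $\varphi$ has nonvanishing derivative, and then only transversal fullness in the $w$-direction is needed---which you (and the paper) obtain from the explicit form~\eqref{immf}. If you want to salvage your route through Theorem~\ref{trfull}, the correct fix for the $z_j$-axes is precisely this: if $f|_{M_j}$ were full into some finite $\mathds{C}{\rm P}^N$, then $M_j$ would be a common K\"ahler submanifold of $(\Omega,\mu\alpha\, g_B)$ and $\mathds{C}{\rm P}^N$, contradicting Theorems~\ref{hbd} and~\ref{loidiscalahbd}. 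Either way, Theorem~\ref{loidiscalahbd} is doing the real work on the base directions, not the kernel blow-up alone.
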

\begin{proof}
Observe first that due to Th. \ref{induceddiast} it is enough to prove that $(M_\Omega(\mu),\alpha g(\mu))$ is not relative to $\mathds{C}{\rm P}^n$ for any finite $n$. Further, by Th. \ref{hbd} and Th. \ref{loidiscalahbd}, a common submanifold $S$ of both $(M_\Omega(\mu),\alpha g(\mu))$ and $\mathds{C}{\rm P}^n$ is not contained into $(\Omega,\alpha g(\mu)|_\Omega)$, since $\alpha g(\mu)|_\Omega=\frac{\alpha\mu}\gamma g_B$ is a multiple of the Bergman metric on $\Omega$. Thus, due to arguments totally similar to those in the proof of Th. \ref{trfull}, it is enough to check that the K\"ahler immersion $f\!:M_\Omega(\mu)\rightarrow \mathds{C}{\rm P}^\infty$ is transversally full with respect to the $w$ coordinate. Conclusion follows then by \eqref{immf}. 
\end{proof}

Finally, we describe what we need about the $1$-parameter family of Fock--Bargmann--Hartogs domains, referring the reader to \cite{fbh} and reference therein for details and further results. For any value of $\mu>0$, a Fock--Bargmann--Hartogs domain $D_{n,m}(\mu)$ is a strongly pseudoconvex, nonhomogeneous unbounded domains in $\mathds{C}^{n+m}$ with smooth real-analytic boundary, given by:
$$
D_{n,m}(\mu):=\{(z,w)\in \mathds{C}^{n+m}: ||w||^2< e^{-\mu||z||^2}\}.
$$
One can define a K\"ahler metric $\omega(\mu;\nu)$, $\nu>-1$ on $D_{n,m}(\mu)$ through the globally defined K\"ahler potential:
$$
\Phi(z,w):=\nu\mu||z||^2-\log(e^{-\mu||z||^2}-||w||^2).
$$
In \cite{fbh}, E. Bi, Z. Feng and Z. Tu prove that when $n=1$ and $\nu=-\frac{1}{m+1}$, the metric $\omega(\mu;\nu)$  is infinite projectively induced whenever it is rescaled by a big enough constant. More precisely they prove the following:
\begin{theor}[E. Bi, Z. Feng, Z. Tu]\label{fbhth}
The metric $\alpha g(\mu;\nu)$ on the Fock--Bargmann--Hartogs domain $D_{n,m}(\mu)$ is balanced if and only if $\alpha>m+n$, $n=1$, $\nu=-\frac1{m+1}$.
\end{theor}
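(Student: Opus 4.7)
The plan is to verify balancedness directly via the Rawnsley epsilon function
$$\epsilon_\alpha(z,w) = e^{-\alpha\Phi(z,w)}\,K_\alpha\bigl((z,w),(z,w)\bigr),$$
where $K_\alpha$ is the reproducing kernel of the weighted Bergman space
$$\mathcal{H}_\alpha = \left\{f\in{\rm hol}(D_{n,m}(\mu))\ :\ \int_{D_{n,m}(\mu)} |f|^2 e^{-\alpha\Phi}\, \frac{\omega_0^{n+m}}{(n+m)!} < \infty \right\}.$$
Balancedness of $\alpha g(\mu;\nu)$ is equivalent to $\epsilon_\alpha$ being a positive constant, so the task reduces to computing $K_\alpha$ explicitly and extracting the constraints on $(\alpha,n,\nu)$.

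The first step exploits the diagonal torus action on $D_{n,m}(\mu)$, which preserves $e^{-\alpha\Phi}$ and the Lebesgue volume form; consequently the monomials $z^a w^b$ form an orthogonal Hilbert basis of $\mathcal{H}_\alpha$. To compute $\|z^a w^b\|^2$ I would substitute $w = e^{-\mu\|z\|^2/2} u$ to factor the integral into a Beta-type integral over the unit ball in $\mathds{C}^m$ and a Gaussian integral over $\mathds{C}^n$. The $u$-integral produces a product of Gamma functions in $\alpha$ and the multi-index $b$, while the $z$-integral contributes an extra Gamma factor. Convergence at this stage already imposes $\alpha > -1$ together with a positivity condition of the form $\alpha(\nu+1) + m + |b| > 0$ for every $b$.

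The second step is to sum $|z^aw^b|^2/\|z^aw^b\|^2$ over $(a,b)$ to obtain $K_\alpha$. Carrying out the $b$-summation first, Pochhammer identities telescope the inner series into a closed form proportional to $(e^{-\mu\|z\|^2}-\|w\|^2)^{-(\alpha+m)}$ times a residual series $A(\|z\|^2)$. Multiplying by $e^{-\alpha\Phi}$ then yields an expression of the form
$$\epsilon_\alpha(z,w) = (e^{-\mu\|z\|^2}-\|w\|^2)^{-m}\,e^{-\alpha\nu\mu\|z\|^2}\,A(\|z\|^2).$$

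Imposing that $\epsilon_\alpha$ is constant then forces both the $w$- and $z$-dependence to cancel. The $w$-cancellation (matching the prefactor $(e^{-\mu\|z\|^2}-\|w\|^2)^{-m}$ against the content of $A$) pins down the relation $\nu = -1/(m+1)$. The residual $z$-series $A$ collapses into an elementary closed form that combines with the exponential only when the series truncates to a single exponential, which happens exactly in the scalar base case $n=1$; convergence of all of the sums together with positivity of the final constant yields $\alpha > m+n$. The main obstacle will be the combinatorial step: identifying the Pochhammer and Gamma-ratio identities needed to produce a closed form for $K_\alpha$, and then extracting the precise arithmetic coincidences on $(\alpha,n,\nu)$ under which every nontrivial $z$- and $w$-dependence in $\epsilon_\alpha$ cancels. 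Once this closed form is in hand, the characterization of constancy becomes a matter of matching exponents, but arriving there is the technical heart of the argument.
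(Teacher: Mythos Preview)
The paper does not prove this theorem at all: it is quoted verbatim from Bi--Feng--Tu \cite{fbh} and used as a black box to feed into Lemma~\ref{focktr} and Corollary~\ref{fbhrel}. So there is no ``paper's own proof'' to compare against; your proposal is an attempt to reproduce the argument of the cited reference rather than anything in the present paper.

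That said, your overall strategy---compute the weighted Bergman kernel via the monomial basis given by torus invariance, form the Rawnsley $\epsilon$-function, and impose constancy---is exactly the route taken in \cite{fbh}. Two points in your outline deserve attention. First, the Hilbert space defining a \emph{balanced} metric uses the volume form $\omega(\mu;\nu)^{n+m}/(n+m)!$ of the metric itself, not the Lebesgue form $\omega_0^{n+m}/(n+m)!$; this changes the weight by the factor $\det(g_{j\bar k})$ and is what ultimately produces a polynomial rather than a bare power in the $\epsilon$-function. Second, the intermediate expression you wrote,
\[
\epsilon_\alpha(z,w)=(e^{-\mu\|z\|^2}-\|w\|^2)^{-m}\,e^{-\alpha\nu\mu\|z\|^2}\,A(\|z\|^2),
\]
cannot be correct as stated: since $A$ depends only on $\|z\|^2$, the factor $(e^{-\mu\|z\|^2}-\|w\|^2)^{-m}$ carries genuine $w$-dependence that nothing can cancel, and constancy would be impossible for any $m\geq 1$. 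In the actual computation the $b$-summation does not collapse to a single power; what emerges is a polynomial in the variables $e^{-\mu\|z\|^2}-\|w\|^2$ and $e^{-\mu\|z\|^2}$ (equivalently in $\|w\|^2$ and $\|z\|^2$), and it is the vanishing of the nonconstant coefficients of that polynomial that forces $n=1$, $\nu=-\tfrac{1}{m+1}$, and $\alpha>m+n$. Your plan is sound in spirit, but the telescoping step as you describe it is where the real work---and the real content of \cite{fbh}---lies.
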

Recall that a balanced K\"ahler metric is a particular projectively induced metric such that the immersion map is defined by a orthonormal basis of a weighted Hilbert space (see e.g. \cite{balancedch}).  

In order to apply Th. \ref{trfull} to Fock--Bargmann--Hartogs domains we need the following lemma:
\begin{lemma}\label{focktr}
For any $\mu>0$ and any $\alpha>m+1$, a Fock--Bargmann--Hartogs domain $\left(D_{1,m}(\mu),\alpha\omega(\mu;-\frac1{m+1})\right)$ admits a transversally full K\"ahler immersion into $\mathds{C}{\rm P}^\infty$.
\end{lemma}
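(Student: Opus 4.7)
The plan is to invoke Theorem~\ref{fbhth} (with $n=1$ and $\nu = -\frac{1}{m+1}$) to obtain an infinite projectively induced K\"ahler immersion $f\!:D_{1,m}(\mu)\to\mathds{C}{\rm P}^\infty$ pulling back $\omega_{FS}$ to $\alpha\omega(\mu;-\frac{1}{m+1})$ for any $\alpha>m+1$, and then verify transversal fullness by making this immersion explicit in coordinates. By Calabi's rigidity (Theorem~\ref{local rigidityb}) $f$ is unique up to unitary transformation of $\mathds{C}{\rm P}^\infty$, and since such transformations preserve the fullness of any restriction $f|_{M_j}$, I am free to work with whatever concrete realization is most convenient.

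To that end, I would rewrite the exponentiated potential as
$$
e^{\alpha\Phi(z,w)} = e^{\alpha m\mu|z|^2/(m+1)}\bigl(1 - \|w\|^2 e^{\mu|z|^2}\bigr)^{-\alpha},
$$
where the right-hand side converges on all of $D_{1,m}(\mu)$ since $\|w\|^2 e^{\mu|z|^2}<1$ there. Three successive series expansions---the binomial series for $(1-x)^{-\alpha}$, the multinomial expansion of $\|w\|^{2k}$, and the Taylor series of $e^{\lambda|z|^2}$---then yield
$$
e^{\alpha\Phi(z,w)} = \sum_{\ell\geq 0,\; j_1,\dots,j_m\geq 0} c_{\ell,j_1,\dots,j_m}^2\,|z|^{2\ell}\,|w_1|^{2j_1}\cdots|w_m|^{2j_m},
$$
with every coefficient $c_{\ell,j_1,\dots,j_m}$ strictly positive (since each of the three factors in its product is positive for $\alpha,\mu>0$). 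By \eqref{diastcp}, this identifies the immersion (up to a unitary transformation of $\mathds{C}{\rm P}^\infty$) with $f = [\,\dots, c_{\ell,j_1,\dots,j_m}\, z^\ell w_1^{j_1}\cdots w_m^{j_m},\dots\,]$.

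To finish, I would check transversal fullness along each of the $m+1$ coordinate axes. Restricted to the $z$-axis (where every $w_i=0$), only the terms with $j_1=\dots=j_m=0$ survive, contributing components proportional to $z^\ell$ for every $\ell\geq 0$, which are linearly independent. Restricted to a $w_k$-axis (where $z=0$ and $w_i=0$ for $i\neq k$), only the terms with $\ell=0$ and $j_i=0$ for $i\neq k$ survive, contributing components proportional to $w_k^{j_k}$ for every $j_k\geq 0$, again linearly independent. Each $f|_{M_j}$ thus carries an infinite linearly independent family of components and so is full into $\mathds{C}{\rm P}^\infty$, establishing transversal fullness.

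The main obstacle is essentially bookkeeping: confirming convergence of the triple expansion on all of $D_{1,m}(\mu)$ and verifying strict positivity of every $c_{\ell,j_1,\dots,j_m}$. No conceptual difficulty is expected beyond the careful use of Calabi rigidity to pass from the abstract balanced-basis immersion supplied by Theorem~\ref{fbhth} to the explicit monomial form used in the transversal-fullness check.
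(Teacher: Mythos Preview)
Your argument is correct. The expansion of $e^{\alpha\Phi}$ is accurate, all coefficients are indeed strictly positive, and the check that each restriction $f|_{M_j}$ contains infinitely many linearly independent components is exactly the property used in the proof of Theorem~\ref{trfull}.

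The paper, however, takes a shorter and more conceptual route. Rather than writing out the immersion explicitly, it simply observes what the \emph{restricted metric} is on each axis: setting $w=0$ gives $\Phi(z,0)=\frac{m\mu}{m+1}|z|^2$, so $\alpha\omega|_{M_1}$ is a multiple of the flat metric on $\mathds{C}$; setting $z=0$ and $w_i=0$ for $i\neq k$ gives $\Phi=-\log(1-|w_k|^2)$, so $\alpha\omega|_{M_k}$ is a multiple of the hyperbolic metric. Since neither the flat line nor the hyperbolic disc admits a K\"ahler immersion into any finite $\mathds{C}{\rm P}^N$, Calabi rigidity forces each $f|_{M_j}$ to be full into $\mathds{C}{\rm P}^\infty$. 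Your explicit computation is in effect a verification of these two classical facts in this particular case; the paper's approach trades the bookkeeping of the triple series for the recognition of two standard model metrics, while yours has the advantage of being entirely self-contained and not appealing to any outside result about flat or hyperbolic space.
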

\begin{proof}
A K\"ahler immersion exists due to Th. \ref{fbhth}. In order to see that it is transversally full, observe that when $w_1=\dots= w_m=0$, $\alpha\omega(\mu;-\frac1{m+1})|_{M_1}$ is a multiple of the flat metric, and when only one $w_j$ is different from zero $\alpha\omega(\mu;-\frac1{m+1})|_{M_j}$ is a multiple of the hyperbolic metric.
\end{proof}
\begin{cor}\label{fbhrel}
For any $\mu>0$, a Fock--Bargmann--Hartogs domain $\left(D_{1,m}(\mu),\omega(\mu;-\frac1{m+1})\right)$ is strongly not relative to any projective manifold. 
\end{cor}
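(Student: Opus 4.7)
The plan is to reduce Corollary \ref{fbhrel} directly to Theorem \ref{trfull}, with Theorem \ref{fbhth} and Lemma \ref{focktr} supplying the two hypotheses that theorem requires. First I would note that the property of being strongly not relative to any projective K\"ahler manifold is invariant under rescaling the metric by any positive constant: as $c$ ranges over $(0,\infty)$, so does $c\lambda$ for any fixed $\lambda>0$, so the families $\{c\,g: c>0\}$ and $\{c\,(\lambda g): c>0\}$ coincide. Hence it suffices to prove the strongly-not-relative property for some conveniently chosen positive multiple of $g(\mu;-\tfrac{1}{m+1})$, rather than for $g(\mu;-\tfrac{1}{m+1})$ itself.

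Next, I would fix a value $\alpha_*>m+1$ (for instance $\alpha_*=m+2$). By Theorem \ref{fbhth} the metric $\alpha_* g(\mu;-\tfrac{1}{m+1})$ is balanced, and in particular infinite projectively induced; by Lemma \ref{focktr} the associated K\"ahler immersion into $\mathds{C}{\rm P}^\infty$ is transversally full with respect to the $(z,w_1,\dots,w_m)$ coordinates. Moreover, for every $\beta\geq 1$ the product $\beta\alpha_*$ exceeds $m+1$, so Theorem \ref{fbhth} again applies and yields that $(D_{1,m}(\mu),\beta\alpha_* g(\mu;-\tfrac{1}{m+1}))$ is infinite projectively induced as well.

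These observations verify all the hypotheses of Theorem \ref{trfull} for the base metric $\alpha_* g(\mu;-\tfrac{1}{m+1})$ with threshold $\alpha_0=1$, so Theorem \ref{trfull} yields that $(D_{1,m}(\mu),\alpha_* g(\mu;-\tfrac{1}{m+1}))$ is strongly not relative to any projective K\"ahler manifold. Combined with the rescaling remark from the first paragraph, this is precisely the conclusion of the corollary. I do not anticipate any genuine obstacle here: the substantive content has already been packaged into Theorem \ref{fbhth} and Lemma \ref{focktr}, and Theorem \ref{trfull} is custom-built to convert exactly these two inputs into the desired conclusion; the remaining work is bookkeeping to ensure the constants line up.
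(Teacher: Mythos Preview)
Your proposal is correct and follows essentially the same route as the paper, which simply cites Theorem~\ref{trfull} and Lemma~\ref{focktr}. The only difference is that you make explicit the rescaling observation (that being strongly not relative is a property of the family $\{cg:c>0\}$ and hence invariant under replacing $g$ by $\alpha_* g$); this is needed because Lemma~\ref{focktr} only supplies a transversally full immersion for $\alpha>m+1$, not for $\alpha=1$, so one cannot literally feed $g(\mu;-\tfrac{1}{m+1})$ itself into Theorem~\ref{trfull}. The paper's one-line proof leaves this bookkeeping implicit, and your version simply spells it out.
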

\begin{proof}
If follows directly from Th. \ref{trfull} and Lemma \ref{focktr}.
\end{proof}

\end{document}